\newtheorem{theorem}{Theorem}
\newtheorem{lemma}[theorem]{Lemma}
\newtheorem{prop}[theorem]{Proposition}
\newtheorem{fact}[theorem]{Fact}
\newtheorem*{theorem*}{Theorem} 
\newtheorem*{corollary*}{Corollary}
\theoremstyle{definition}
\newtheorem{definition}[theorem]{Definition}
\newtheorem*{remark*}{Remark}
\newtheorem*{definition*}{Definition}
\newtheorem*{example*}{Example}
\newtheorem*{namedtheorem}{\theoremname}
\newcommand{\theoremname}{testing}
\newenvironment{named}[1]{\renewcommand{\theoremname}{#1}\begin{namedtheorem}}{\end{namedtheorem}}
\begin{document}

\title {{\bf Extending powers of pseudo-Anosovs}}
\author{ Cristina Mullican}
\date{}
\maketitle

\begin{quote}
{\small {\bf Abstract.} 
Biringer, Johnson, and Minsky showed that a pseudo-Anosov map on a boundary component of an irreducible 3-manifold has a power that partially extends to the interior if and only if the (un)stable laminations of $f$ is an $\mathbb{R}$-projective limit of meridians. We prove that the power required for a pseudo-Anosov map to partially extend is not universally bounded.  We construct a family of pseudo-Anosov maps $f_i$ for all $i=1,2...$ on a boundary component of a family of irreducible 3-manifolds $M_i$ such that $f_i^i$ partially extends to the interior of $M_i$ but $f_i^j$ does not for $j<i$. }
\end{quote}

\medskip 

\section{Introduction}

Let $M$ be a compact, orientable, and irreducible 3-manifold with some compressible boundary component $S$. Then for a homeomorphism $f:S\to S$, we say that $f$ \textit{partially extends} to $M$ if there is some nontrivial compression body $C\subset M$ with $\partial_+C=S$ and a homeomorphism $\phi:C\to C$ such that $\phi|_{S}=f$. 

Biringer, Johnson, and Minsky \cite{BJM} prove the following:

\begin{theorem*} 
[BJM, 2013] Let $f:\Sigma\to \Sigma$ be a pseudo-Anosov homeomorphism of some compressible boundary component $\Sigma$ of a compact, orientable and irreducible 3-manifold $M$. Then the (un)-stable lamination of $f$ is an $\mathbb{R}$-projective limit of meridians if and only if $f$ has a power that partially extends to $M$. 
\end{theorem*}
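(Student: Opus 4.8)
The plan is to prove the two implications separately, transferring the problem from the topology of $M$ to the coarse geometry of the curve complex $\mathcal{C}(\Sigma)$. Write $\lambda^u,\lambda^s$ for the unstable and stable laminations of $f$ and $\mu>1$ for its stretch factor, so that $f\cdot\lambda^u=\mu\,\lambda^u$ and $f\cdot\lambda^s=\mu^{-1}\lambda^s$ as measured laminations. The one dynamical input I will use repeatedly is Thurston's north--south dynamics for the action of $f$ on $\mathcal{PML}(\Sigma)$: the only fixed points are the attracting point $[\lambda^u]$ and the repelling point $[\lambda^s]$, and $f^k(p)\to[\lambda^u]$ for every $p\neq[\lambda^s]$.

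I would dispatch the implication ``extends $\Rightarrow$ limit of meridians'' first, since it is immediate from this dynamics. Suppose $f^n$ partially extends, so there is a compression body $C\subset M$ with $\partial_+C=\Sigma$ and $\phi\colon C\to C$ with $\phi|_\Sigma=f^n$. Choose any meridian $m\subset\Sigma$ bounding a disk $D\subset C$. Each $\phi^k(D)$ is again a properly embedded disk, so each $f^{nk}(m)=\phi^k|_\Sigma(m)$ is a meridian. As the filling lamination $\lambda^s$ is not a simple closed curve, $m\neq[\lambda^s]$, and north--south dynamics gives $f^{nk}(m)\to[\lambda^u]$; hence $[\lambda^u]$ is a projective limit of meridians (the statement for $\lambda^s$ follows by running $\phi^{-1}$).

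For the converse I would pass to $\mathcal{C}(\Sigma)$, which is $\delta$-hyperbolic and on which $f$ acts loxodromically with a quasi-axis whose boundary endpoints are $[\lambda^u]$ and $[\lambda^s]$, via the Masur--Minsky hyperbolicity theorem and Klarreich's identification of $\partial\mathcal{C}(\Sigma)$ with the space of minimal filling laminations. The key geometric input is that the disk set $\mathcal{D}\subset\mathcal{C}(\Sigma)$ of all curves bounding disks in $M$ is quasiconvex, which I would take from the theory of the disk complex (Masur--Schleimer). The hypothesis that $\lambda^u$ is a limit of meridians means precisely that $[\lambda^u]$ lies in the limit set $\partial\mathcal{D}$; picking a base meridian $x_0\in\mathcal{D}$, quasiconvexity then forces the geodesic ray from $x_0$ to $[\lambda^u]$, and hence the attracting end of the $f$-axis, to remain within a bounded neighborhood of $\mathcal{D}$. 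Consequently the forward orbit $f^k(x_0)$ fellow-travels $\mathcal{D}$: every $f^k(m)$ lies within uniformly bounded curve-complex distance of an honest meridian.

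The heart of the matter, and the step I expect to fight with, is upgrading this coarse invariance to an honest extension. The goal is to manufacture a nonempty collection of disjoint meridians whose isotopy class is preserved by some power $f^n$: given such an $f^n$-invariant disk system one lets $C$ be the compression body it cuts off in $M$, and extends $f^n$ over $C$ by coning the disks and applying the Alexander trick to the complementary pieces, taking care that the resulting homeomorphism restricts to $f^n$ on $\Sigma$ rather than merely to something isotopic to it. To produce the invariant disk system I would feed the bounded geodesic image theorem of Masur--Minsky into the fellow-travelling above: the subsurface projections of the meridians $m_i\to[\lambda^u]$ are uniformly controlled, so the combinatorial data recording how these meridians sit near $\mathcal{D}$ takes finitely many values along the $f$-orbit, and a pigeonhole argument makes it eventually periodic. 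Turning this periodicity into an actual $f^n$-invariant isotopy class of compressing disks, and simultaneously controlling the complementary incompressible pieces of $M$, is the main obstacle, since it is exactly here that the passage from the coarse geometry back to embedded surfaces in the $3$-manifold must be made rigorous.
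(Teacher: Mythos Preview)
The paper does not contain a proof of this statement. The theorem is quoted in the introduction as a result of Biringer--Johnson--Minsky \cite{BJM}, with alternative proofs attributed to Ackermann \cite{A} and Maher--Schleimer \cite{MS}; the present paper takes it as motivation and then proves something entirely different (Theorem~\ref{T:main}, that the power needed to partially extend is not universally bounded). There is therefore nothing in the paper to compare your proposal against.

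That said, your sketch is a recognizable outline of the original BJM argument: the easy direction is exactly the north--south dynamics computation you give, and the hard direction does go through hyperbolicity of $\mathcal{C}(\Sigma)$, quasiconvexity of the disk set, and a passage from coarse invariance to an honest $f^n$-invariant disk system. You are right to flag that last step as the substantive one. In the BJM paper it is not done by a pigeonhole on subsurface projection data as you suggest; rather, one shows that the limit set of the disk set in $\partial\mathcal{C}(\Sigma)$ decomposes according to the sub-compression-bodies of $M$, and that a lamination lying in this limit set must already lie in the limit set of the disk set of some \emph{specific} sub-compression-body $C$, which is then shown to be $f^n$-invariant for some $n$ because there are only finitely many sub-compression-bodies up to the action of the mapping class group. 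Your proposed mechanism (periodicity of combinatorial data along the orbit) is not obviously wrong, but it is not the argument in the literature, and as you yourself note, converting coarse periodicity back into an embedded invariant disk system is exactly where the difficulty lives; as written this step is a genuine gap.
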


Ackermann gives an alternate proof in \cite{A} using earlier machinery of Casson and Long (see \cite{CL} and \cite {Long}). Maher and Schleimer give an alternate proof using train tracks and subsurface projections (see \cite{MS}). 

Partial extension comes up naturally when hyperbolizing 3-manifolds created as gluings. For instance, in \cite{L}, Lackenby studied the hyperbolization of 3-manifolds obtained by `generalized Dehn surgery', i.e. manifolds obtained by attaching a handlebody H to a compact 3-manifold M along some boundary component. He showed (modulo the Geometrization Theorem) that if M is `simple' and we choose any homeomorphism $\phi:\partial M\to \partial H$ and a homeomorphism $f : \partial H\to \partial H$ such that no power partially extends to $H$, then for infinitely many integers $n$, the manifold $M\cup_{f^n\circ \phi} H$ is hyperbolic.

Inspired by both of these theorems, it is natural to ask if there is some bound on the power of $f$ required to partially extend as this would imply there are only a finite number of powers of $f$ to check for partial extension and subsequent obstruction to hyperbolization. We will show that there is no universal bound via construction in the proof of the following:

\begin{theorem} For $i=1,2,...$ there is a compact, orientable and irreducible 3-manifold $M_i$ with compressible boundary component $\Sigma_i$ and a pseudo-Anosov $f_i:\Sigma_i\to \Sigma_i$ such that $f_i^i$ partially extends to $M_i$ and $f_i^j$ does not for $j<i$.
\label{T:main}
\end{theorem}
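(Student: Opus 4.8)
The plan is to reduce ``partial extension of a power of $f_i$'' to a statement about disk sets, to build $M_i$ so that it contains a \emph{unique} nontrivial compression body with boundary $\Sigma_i$, and then to produce $f_i$ so that the smallest power lying in the stabilizer of that compression body's disk set is exactly $i$: a homological invariant will give the lower bound $j\geq i$, and an explicit commuting pair will realize $j=i$. First I would recall the standard fact that a homeomorphism $h$ of $\partial_+C$ extends over a compression body $C$ if and only if $h$ preserves the meridian (disk) set $\mathcal D(C)\subset\mathcal C(\partial_+C)$, and that a nontrivial compression body $C'\subset M$ with $\partial_+C'=\Sigma$ is the same data as an isotopy class of system of disjoint compressing disks for $\Sigma$ in $M$. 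Consequently $f_i^{\,j}$ partially extends to $M_i$ if and only if there is a nontrivial compression body $C\subset M_i$ with $\partial_+C=\Sigma_i$ and $f_i^{\,j}\,\mathcal D(C)=\mathcal D(C)$; so it suffices to make $M_i$ rigid enough to have only one such $C$, and then to control the return time of $f_i$ to $\mathrm{Stab}(\mathcal D(C))$.

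To build $M_i$ (the case $i=1$ being trivial, so take $i\geq 2$): fix a genus $g$ large compared with $i$, a closed orientable genus-$g$ surface $\Sigma_i$, an orientation-preserving order-$i$ homeomorphism $\beta_i\colon\Sigma_i\to\Sigma_i$, and a non-separating simple closed curve $m$ such that the curves $m,\beta_im,\dots,\beta_i^{\,i-1}m$ are pairwise disjoint and have $\mathbb{Q}$-linearly independent classes in $H_1(\Sigma_i;\mathbb{Q})$; an explicit ``$i$-fold symmetric'' configuration (one may even take $\beta_i$ to act freely) does the job. Let $C$ be the compression body obtained by compressing $\Sigma_i$ along $m$; then $F:=\partial_-C$ has genus $g-1\geq 2$ and $L_C:=\ker(H_1(\Sigma_i)\to H_1(C))$ is the line spanned by $[m]$. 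Let $P$ be a compact orientable hyperbolic $3$-manifold with totally geodesic boundary $\partial P\cong F$, and set $M_i:=C\cup_F P$. Then $M_i$ is compact, orientable, and irreducible (it is glued along the incompressible surface $F$), $\Sigma_i=\partial_+C$ is compressible, and — since $C$ has a single handle and $P$ is acylindrical with incompressible boundary — every compressing disk for $\Sigma_i$ in $M_i$ can be isotoped off $P$ and then into the $m$-handle, so $C$ is the only nontrivial compression body in $M_i$ with $\partial_+=\Sigma_i$, up to isotopy.

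To build $f_i$: let $\widehat C$ be the compression body with $\partial_+=\Sigma_i$ whose meridians are $m,\beta_im,\dots,\beta_i^{\,i-1}m$, so that $\beta_i$ acts (freely) on $\widehat C$ permuting its handles cyclically. Pick a pseudo-Anosov in the compression-body group of $\widehat C/\langle\beta_i\rangle$ (compression-body groups contain pseudo-Anosovs of the outer surface), replace it by a power so that its lifts to $\Sigma_i$ commute with $\beta_i$, and let $g_i$ be the unique lift that preserves the $m$-handle of $\widehat C$; then $g_i$ is a pseudo-Anosov of $\Sigma_i$ with $[\beta_i,g_i]=1$ and $g_i\in\mathrm{Stab}(\mathcal D(C))$, so in particular $g_{i*}$ preserves the line $L_C$. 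Set $f_i:=\beta_i\circ g_i$. Since $\beta_i$ is finite order and commutes with the pseudo-Anosov $g_i$, it preserves each of the two invariant foliations of $g_i$ (it cannot swap them without forcing dilatation $1$) together with their transverse measures, so $f_i$ is pseudo-Anosov with the same foliations. Now $f_i^{\,i}=\beta_i^{\,i}g_i^{\,i}=g_i^{\,i}\in\mathrm{Stab}(\mathcal D(C))$, so $f_i^{\,i}$ extends over $C\subset M_i$ and hence partially extends to $M_i$; whereas for $0<j<i$ we have $f_{i*}^{\,j}(L_C)=\beta_{i*}^{\,j}g_{i*}^{\,j}(L_C)=\beta_{i*}^{\,j}(L_C)=\langle[\beta_i^{\,j}m]\rangle\neq L_C$ by linear independence, so $f_i^{\,j}\notin\mathrm{Stab}(\mathcal D(C))$, and since $C$ is the unique nontrivial compression body in $M_i$ with $\partial_+=\Sigma_i$, $f_i^{\,j}$ does not partially extend to $M_i$.

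The hard part will be twofold. The first difficulty is the rigidity of $M_i$: one must show carefully, via innermost-disk surgery using the incompressibility of $F$ in $P$, the acylindricity of $P$, and the single-handledness of $C$, that every compression body in $M_i$ with outer boundary $\Sigma_i$ is isotopic to $C$ or to a trivial collar — in particular ruling out compression bodies whose negative boundary is incompressible only inside the piece $C$ and not in $M_i$. The second, and I expect the more delicate, difficulty is producing $g_i$: arranging the symmetric curve system $\{\beta_i^{\,k}m\}$ with the right disjointness and homological-independence properties, invoking that compression-body groups contain pseudo-Anosovs of the boundary, and executing the lifting/choice-of-lift step so that $g_i$ is simultaneously $\beta_i$-equivariant and lies in the compression-body group of the single-handled $C$; this is the technical heart of the construction.
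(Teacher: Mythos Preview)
There are two genuine gaps. First, your uniqueness claim---that $C=\Sigma_i[m]$ is the only nontrivial compression body in $M_i$ with exterior boundary $\Sigma_i$---is simply false, already inside $C$. If $\gamma$ bounds a once-punctured torus in $\Sigma_i$ containing $m$, then $\gamma$ is separating and bounds a disk in $C$, so $\Sigma_i[\gamma]\subsetneq C$ is a proper nontrivial sub-compression body; in Biringer--Vlamis terms $C$ has height $2$ and is not minimal (minimal $\Leftrightarrow$ solid torus or small with \emph{separating} compressing curve). Your homological obstruction $f_{i*}^{\,j}L_C\neq L_C$ says nothing about $\Sigma_i[\gamma]$, since $L_{\Sigma_i[\gamma]}=0$. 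This hole is patchable---any proper sub-compression body of $C$ has height $1$, hence a unique separating meridian that a pseudo-Anosov cannot fix---but that is exactly the endgame the paper uses, and you would have to add it.

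Second, and more seriously, your construction of $g_i$ does not yield $g_i\in\mathrm{Stab}(\mathcal D(C))$. Lifting a pseudo-Anosov from $\widehat C/\langle\beta_i\rangle$ gives a map extending to $\widehat C$, i.e.\ preserving $\mathcal D(\widehat C)$, but there is no mechanism forcing any lift to preserve the strictly smaller set $\mathcal D(C)$. ``The lift preserving the $m$-handle'' is ill-defined: read literally it forces $g_i(m)=m$, impossible for a pseudo-Anosov; read as ``preserves $C$'' it is the very thing to be proved. The one-handle sub-compression bodies $\Sigma_i[\beta_i^{\,k} m]$ are not canonical in $\widehat C$---an automorphism of $\widehat C$ can carry $m$ to a band-sum of $m$ with $\beta_i m$---so lifts need not permute them. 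Without $g_i\in\mathrm{Stab}(\mathcal D(C))$ you lose both directions at once: $f_i^{\,i}=g_i^{\,i}$ is not known to extend to anything inside $M_i$, and the identity $g_{i*}L_C=L_C$ underpinning your lower bound is unjustified. The paper avoids both issues by a different route: it takes $M_g$ to be a compression body $K_1=S_{2g}[\gamma,\alpha]$ (no hyperbolic cap needed), builds $f_g$ via Fathi's theorem as a rotation $r$ composed with Dehn twists along curves that bound disks or annuli in \emph{every} rotate $K_j=r^{j-1}(K_1)$, so that $f_g(K_j)=K_{j+1}$ on the nose; and for the lower bound it shows, by a height count plus Mayer--Vietoris, that any sub-compression body of $K_1\cap K_{\ell+1}$ is small with a unique separating meridian, which a pseudo-Anosov power cannot fix.
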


In section 2, we present background material and in section 3 we prove the main theorem via a construction. \\

\noindent {\bf Acknowledgments:} Many thanks to my advisor Ian Biringer for many helpful discussions and thoughtful guidance.

\section{Background}
A \textit{compression body} $C$ is an orientable, compact, irreducible 3-manifold with a preferred boundary component $\partial_+C$ that $\pi_1$-surjects. We say $\partial_+C$ is the \textit{exterior boundary component} of $C$.

\begin{definition} If we fix a surface $S$ to be the exterior boundary, an \textit{$S$-compression body} is a pair $(C,m)$ where $C$ is a compression body with a homeomorphism $m:S\to \partial_+ C$. Here, $m$ is the \textit{marking} of $C$ which is dropped when non-ambiguous. 
\end{definition}

Any $S$-compression body $C$ can be constructed in the following way: Set $\{\alpha_1,...,\alpha_n\}$ to be a maximal disjoint set of simple closed curves on $S$ that bound disks in $C$. First consider $S\times [0,1]$ and attach 2-handles along annuli in $S\times \{0\}$ whose core curves are $\{\alpha_i\}\times \{0\}$. Then we attach 3-balls along any resulting boundary components that are homeomorphic to $S^2$.  In this construction, $\partial_+C=S\times\{1\}$. For details see \cite{BV}. 

When an $S$-compression body can be constructed as above by attaching 2-handles along simple closed curves $\{\alpha_1,...,\alpha_n\}$ and subsequent 3-balls, we denote it as $S[\alpha_1,...,\alpha_n]$. The \textit{trivial} $S$-compression body is homeomorphic to $S\times I$. We call $\partial C\backslash \partial_+C$ the \textit{interior boundary} of $C$. 

For $S$-compression bodies $(C,m)$ and $(D,n)$ we write $(C,m)\subset(D,n)$ if there exists an embedding $H:C\to D$ such that $n=H\circ m$. 

Two $S$-compression bodies $(C,m)$ and $(C',m')$ are equivalent if there exists a homeomorphism $h:C\to C'$ such that the diagram below commutes.
\begin{center}
 \begin{tikzcd}[row sep=tiny]
                                     & \partial_+C'  \\
  S \arrow[ur,"m'"] \arrow[dr,"m"] &              \\
                                     &\partial_+C\arrow[uu,"h|_{\partial_+C}"']
 \end{tikzcd}
\end{center}

A homeomorphism $f:S\to S$ sends $(C,m)$ to $(C,mf^{-1} )$. This action respects the equivalence relation. From here forward, we will drop all markings and will abusively refer to compression bodies when we mean equivalence classes of compression bodies. In particular, we denote this action as $f(C)$.

\begin{definition} Let $C$ be an $S$-compression body and $f:S\to S$ a homeomorphism. We say that $f$ \textit{extends} to $C$ if there is a homeomorphism $\phi:C\to C$ such that $\phi|_{\partial_+C}=f$. Here we can write $f(C)=C$. Recall that we say $f$ \textit{partially extends} to $C$ if there is an $S$-compression body $D\subset C$ with a homeomorphism $\psi:D\to D$ such that $\psi|_{\partial_+D}=f$.
\end{definition}

\begin{definition}
Let $\alpha$ be a simple closed curve in $\partial_+ C$. If $\alpha$ bounds a disk in $C$ then we say $\alpha$ is a \textit{meridian} of $C$ and $\alpha$ \textit{compresses} in $C$. Moreover, if there is some simple closed curve $\alpha'$ in the interior boundary of $C$ such that $\alpha$ and $\alpha'$ together bound an embedded annulus in $C$ we say $\alpha$ \textit{bounds an annulus}.
\end{definition}

Recall the following well known fact which is proved in detail in \cite{M}:
\begin{fact} Let $C$ be an $S$-compression body. A Dehn twist $T_\alpha:S\to S$ extends to $C$ if $\alpha$ compresses in $C$ or bounds an annulus in $C$. 
\label{Fact:dehntwist}
\end{fact}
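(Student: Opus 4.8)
The plan is to realize the Dehn twist $T_\alpha$ as a homeomorphism supported in a regular neighborhood of the disk or annulus that $\alpha$ bounds, arranged so that it is the identity on the part of that neighborhood's boundary lying in the interior of $C$; it then extends by the identity to a homeomorphism of all of $C$ that restricts to $T_\alpha$ on $S=\partial_+C$. The starting observation is that $T_\alpha$ itself is supported in an annular neighborhood $\alpha\times[-1,1]\subset S$ of $\alpha$, where in coordinates it is given by the twist map $(\theta,t)\mapsto(\theta+\pi(t+1),t)$ with $\theta\in\mathbb{R}/2\pi\mathbb{Z}$; note that this map is the identity on both boundary circles $t=\pm 1$, since the angular shift there is $0$ and $2\pi$ respectively. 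I would then split into the two cases of the hypothesis.

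First, suppose $\alpha$ compresses, bounding a properly embedded disk $D\subset C$ with $\partial D=\alpha$. Since $C$ is orientable, $D$ is two-sided and has a regular neighborhood $N\cong D\times[-1,1]$ meeting $\partial C$ exactly in the annulus $A=\alpha\times[-1,1]$, with the two disks $D\times\{\pm 1\}$ lying in the interior of $C$. Writing $D$ in polar coordinates $(r,\theta)$, I would define $\phi$ on $N$ by $(r,\theta,t)\mapsto(r,\theta+\pi(t+1),t)$ and by the identity on $C\setminus N$. One checks that this restricts to the twist map, hence to $T_\alpha$, on the side annulus $r=1$; is the identity on the interior disks $t=\pm 1$; and is well defined at $r=0$. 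Because it agrees with the identity on all of $\partial N\setminus A$, it glues to a genuine homeomorphism of $C$.

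Second, suppose $\alpha$ bounds an annulus, so that there is a curve $\alpha'$ in the interior boundary $\partial C\setminus\partial_+C$ with $\alpha$ and $\alpha'$ cobounding an embedded annulus $A\cong S^1\times[0,1]\subset C$, where $S^1\times\{1\}=\alpha$ and $S^1\times\{0\}=\alpha'$. Again using orientability to obtain a product neighborhood $N\cong A\times[-1,1]$ with coordinates $(\theta,s,t)$, I would set $\phi(\theta,s,t)=(\theta+\pi(t+1),s,t)$ on $N$ and the identity elsewhere. The same computation shows that $\phi$ restricts to $T_\alpha$ on the annulus $s=1$, which is an annular neighborhood of $\alpha$ in $S$, and to the identity on the interior faces $A\times\{\pm 1\}$, so it extends across $C$. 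Here $\phi$ incidentally restricts to the Dehn twist $T_{\alpha'}$ on the interior boundary, but this is irrelevant, since the statement only constrains $\phi|_{\partial_+C}$.

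The construction is essentially explicit, so the only points needing care are the routine verifications that the disk and annulus have the asserted product regular neighborhoods and that the identity extension is continuous across the interior faces of $N$. The main (small) obstacle is the boundary bookkeeping: one must confirm that the chosen twist formula really is the identity at $t=\pm 1$, and that the annular neighborhood of $\alpha$ in which $T_\alpha$ is supported can be taken to coincide with $N\cap S$, so that $\phi|_S$ is genuinely equal to $T_\alpha$ rather than merely isotopic to it.
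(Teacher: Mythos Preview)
Your proposal is correct and is exactly the approach the paper indicates: the paper's proof is just the single sentence ``define a Dehn twist of $C$ by twisting in a neighborhood of the disk or annulus bounded by $\alpha$,'' referring to \cite{M} for details, and you have simply written out that twist explicitly with the product-neighborhood coordinates and checked the boundary bookkeeping.
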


The idea of the proof is to define a Dehn twist of $C$  by twisting in a neighborhood of the disk or annulus bounded by $\alpha$.

In the compression body $S[\alpha_1,...,\alpha_n]$ there are likely many curves besides the $\alpha_i$ that compress. 

\begin{fact} If two boundary components of an embedded pair of pants in $\partial_+C$ bound disks then the third boundary component also bounds a disk. \label{Fact:pants}\end{fact}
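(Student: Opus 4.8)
The plan is to produce the disk bounded by the third cuff $\gamma$ by capping off a pair of pants with the two meridian disks we are given. Write $P\subset\partial_+C$ for the embedded pair of pants, with $\partial P=\alpha\cup\beta\cup\gamma$, and suppose $\alpha$ and $\beta$ bound embedded disks $D_\alpha,D_\beta$ in $C$. Since $\alpha$ and $\beta$ are disjoint curves on $\partial_+C$, the boundary circles $\partial D_\alpha$ and $\partial D_\beta$ are disjoint, so after a small isotopy $D_\alpha\cap D_\beta$ is a finite union of circles lying in the interior of $C$. The first step is to remove these intersections by the standard innermost-disk argument: an innermost circle of intersection on $D_\beta$ cuts off a subdisk $E\subset D_\beta$ whose interior is disjoint from $D_\alpha$; together with the subdisk of $D_\alpha$ bounded by $\partial E$ it forms an embedded $2$-sphere, which bounds a ball in $C$ because a compression body is irreducible; isotoping $D_\alpha$ across this ball strictly decreases the number of intersection circles. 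Iterating, I may assume $D_\alpha$ and $D_\beta$ are disjoint.

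Next I would push the interiors of $D_\alpha$ and $D_\beta$ slightly off $\partial_+C$ into the interior of $C$, fixing $\alpha$ and $\beta$, and likewise isotope $P$ rel $\gamma$ a short distance into the interior, so that the three pieces meet pairwise only along $\alpha$ and $\beta$ and are otherwise disjoint from $\partial_+C$. Gluing them yields an embedded surface
\[
D_\gamma \;=\; D_\alpha \cup_\alpha P \cup_\beta D_\beta \;\subset\; C,
\]
which, after smoothing the corners along $\alpha$ and $\beta$, is properly embedded with $D_\gamma\cap\partial_+C=\partial D_\gamma=\gamma$. It is connected because $P$ is, it has a single boundary circle, and since gluing along a circle contributes nothing to the Euler characteristic,
\[
\chi(D_\gamma)\;=\;\chi(D_\alpha)+\chi(P)+\chi(D_\beta)\;=\;1+(-1)+1\;=\;1.
\]
A connected compact surface with one boundary component and Euler characteristic $1$ is a disk, so $\gamma$ bounds a disk in $C$; that is, $\gamma$ is a meridian. (Equivalently, one may simply note that $D_\alpha\cup P\cup D_\beta$ is already an embedded disk with boundary $\gamma$, and any embedded disk in $C$ whose boundary is a curve on $\partial_+C$ can be pushed to be properly embedded, giving the same conclusion.)

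The only real content here is the disjointness step, and its sole ingredient is that compression bodies are irreducible, so that every embedded $2$-sphere bounds a ball; everything after that is the Euler characteristic bookkeeping above. I expect the mild technical point to be making precise that the isotopies pushing $D_\alpha,D_\beta,P$ into the interior can be done simultaneously so that the smoothed union is genuinely embedded and properly embedded, but this is routine and can be handled by working in collar neighborhoods of $\alpha$, $\beta$, $\gamma$ and of $\partial_+C$.
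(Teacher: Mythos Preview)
Your proof is correct and follows essentially the same approach as the paper: cap off the pair of pants $P$ with the two given meridian disks to obtain a disk bounded by the third cuff. The paper's version is a one-liner asserting that $P\cup d_1\cup d_2$ is homeomorphic to a disk, omitting the disjointness and proper-embedding details you supply via the innermost-disk argument and the Euler characteristic computation.
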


 \begin{proof} Let $P$ be a pair of pants embedded in $\partial_+C$ with boundary components $c_1,c_2$ and $c_3$ such that $c_1,c_2$ bounding disks $d_1$ and $d_2$ respectively.  Since  $P\cup d_1 \cup d_2$  is homeomorphic to a disk then $c_3 $ compresses. 
 \end{proof}

\begin{definition} An $S$-compression body $C$ is \textit{small} if it can be written as $S[a]$ for some simple closed curve $a\in S=\partial_+C$. A compression body is \textit{minimal} if it does not contain any (non-trivial) sub-compression bodies. 
\end{definition}

Minimality and smallness are related in the following work of Biringer and Vlamis. 

\begin{prop}[\cite{BV}, Cor. 2.7] An $S$-compression body is minimal if and only if it is a solid torus or a small compression body obtained by compressing a separating curve. 
\label{Prop:small}
\end{prop}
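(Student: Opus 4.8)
The plan is to reduce both directions to a single structural fact about \emph{small} compression bodies: the only meridian of $S[a]$, up to isotopy, is $a$ itself. Granting this, the rest is essentially $\pi_1$-bookkeeping with the explicit construction of $S[a]$ given in Section~2.

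\emph{Minimal $\Rightarrow$ solid torus or small-separating.} Assume $C$ is minimal and non-trivial. Then $C$ has a meridian $a$, and a regular neighborhood of $\partial_+C\times I$ together with a compressing disk $D_a$ for $a$ is an embedded copy of $S[a]\subseteq C$; by minimality $C=S[a]$, so $C$ is small. It remains to rule out $C=S[a]$ with $a$ non-separating and $g(\partial_+C)\ge 2$. In that case pick a separating curve $c$ cutting off a genus-one subsurface $F\ni a$. Then $c$ is isotopic to a commutator supported in $F$, hence trivial in $\pi_1(C)=\pi_1(\partial_+C)/\langle\!\langle a\rangle\!\rangle$, so by the loop theorem (Dehn's lemma) it bounds an embedded disk in $C$; since $g\ge 2$ it is essential, so $S[c]\subseteq C$ is a non-trivial sub-compression body. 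It is proper: in $\pi_1(S[c])\cong\pi_1(\widehat F)\ast\pi_1(\widehat{\partial_+C\setminus F})$ the class $[a]$ generates a $\mathbb Z$-factor of $\pi_1(\widehat F)\cong\mathbb Z^2$, so $a$ does not compress in $S[c]$. This contradicts minimality, and the only surviving cases are $\partial_+C=T^2$ (so $C$ is a solid torus) and $S[a]$ with $a$ separating.

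\emph{Solid torus or small-separating $\Rightarrow$ minimal.} Conversely, let $C$ be a solid torus or $C=S[a]$ with $a$ separating, and let $D\subseteq C$ be a non-trivial sub-compression body with meridian $\alpha$. Then $\alpha$ also bounds a disk in $C$, so by the key fact $\alpha$ is isotopic to $a$ (the meridian of the solid torus in the first case). Hence $S[a]=S[\alpha]\subseteq D\subseteq C=S[a]$, forcing $D=C$; so $C$ is minimal.

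\emph{The key fact, and the main obstacle.} The crux is that $a$ is the only meridian of $S[a]$ up to isotopy. I would prove this by letting $D_a$ be the cocore of the unique $2$-handle, so that cutting $C$ along $D_a$ recovers the product $\partial_+C\times I$ (together with a $3$-ball in the solid-torus case). Given an arbitrary essential disk $E$, make it transverse to $D_a$: circles of $E\cap D_a$ are eliminated by innermost-disk surgery using irreducibility of $C$, and arcs are eliminated by outermost-arc surgeries that replace $E$ by essential disks meeting $a$ in strictly fewer points. Once $E\cap D_a=\varnothing$, the disk $E$ lies in $\partial_+C\times I$ and is inessential, a contradiction — so $\partial E$ must have been isotopic to $a$ all along. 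Verifying that the outermost-arc surgeries terminate and never destroy essentiality is the one genuinely technical point; this is precisely the part for which one leans on \cite{BV}.
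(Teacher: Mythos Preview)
The paper does not give its own proof of this proposition; it is simply quoted from Biringer--Vlamis (their Corollary~2.7) with no argument. So there is nothing in the paper to compare against, and your write-up already goes well beyond what the paper provides.

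Your overall architecture is sound. The forward direction is correct and efficient: any minimal $C$ contains some $S[a]$, hence equals it by minimality; and when $a$ is non-separating with $g(S)\ge 2$, the separating curve $c$ bounding a genus-one subsurface through $a$ is null-homotopic in $C$, bounds an embedded disk by Dehn's lemma, and yields a proper non-trivial $S[c]\subsetneq C$ since $[a]\ne 1$ in $\pi_1(S[c])\cong\pi_1(\widehat{F})*\pi_1(\widehat{S\setminus F})$. The backward direction correctly reduces everything to the key fact that $a$ is the unique meridian of $S[a]$ when $a$ is separating (or $C$ is a solid torus).

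The one place to flag is your sketch of that key fact. Your outermost-arc surgeries \emph{replace} $E$, and in doing so change $\partial E$; so when you finally arrive at a disk disjoint from $D_a$, its boundary need not be the original curve $b$. The sentence ``$\partial E$ must have been isotopic to $a$ all along'' therefore does not follow --- all the process shows is that \emph{some} meridian is isotopic to $a$, which is vacuous. One route that does close: first check directly that any meridian $b$ \emph{disjoint} from $a$ lies in some component $S_i'$ of $S\setminus a$, is trivial in $\pi_1(\widehat{S_i})$, hence bounds a disk in $\widehat{S_i}$, and so is either inessential in $S$ or isotopic to $\partial S_i'=a$; then use the (standard, surgery-proved) connectivity of the disk set under the disjointness relation to propagate this to every meridian. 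You are right to defer the details to \cite{BV}, but be aware that the surgery outline as you wrote it points in a direction that does not finish the argument on its own.
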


A compression body can be built out of minimal compression bodies in the following way: 

\begin{definition} A \textit{sequence of minimal compressions} of an $S$-compression body $C$ is a chain $S\times [0,1]=C_0\subset C_1\subset\cdots\subset C_k=C$ of $S$-compression bodies where $C_{i+1}$ is obtained from $C_i$ by gluing  in a minimal $F_i$-compression body to $F_i$, an interior boundary component of $C_i$. \cite{BV} \end{definition}

In this sequence, each compression body is obtained by gluing in either a solid torus or obtained by compressing a single separating curve. Biringer and Vlamis give a formula for the number of steps required to obtain a compression body from minimal compressions:

\begin{prop}[\cite{BV}, Prop. 2.10] If $C$ is any $S$-compression body with interior boundary $F_1\sqcup \cdots\sqcup F_n$, then the length $k$ of any sequence of minimal  compressions $S\times [0,1]=C_0\subset C_1\subset \cdots \subset C_k=C$ is $\mathfrak{h}(C):=2g(S)-1-\sum_{i=1}^n (2g(F_i)-1)$, the \textnormal{height} of $C$.
\label{Prop:height}
\end{prop}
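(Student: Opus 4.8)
The plan is to show that the integer $\mathfrak{h}$ increases by exactly $1$ at each step of a sequence of minimal compressions, so that $\mathfrak{h}(C)$ equals the length of any such sequence ending at $C$. First I would record the base case: for the trivial compression body $C_0=S\times[0,1]$, the only interior boundary component is $S$ itself, so $\mathfrak{h}(C_0)=(2g(S)-1)-(2g(S)-1)=0$, matching a sequence of length $0$.

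Next I would analyze a single step $C_i\subset C_{i+1}$, in which $C_{i+1}$ is obtained by gluing a minimal $F_i$-compression body $B$ onto an interior boundary component $F_i$ of $C_i$. Since $B$ is attached along the connected surface $F_i$ and is disjoint from $\partial_+C_i$, we have $\partial_+C_{i+1}=\partial_+C_i=S$, while the interior boundary of $C_{i+1}$ is obtained from that of $C_i$ by removing $F_i$ and adjoining the interior boundary components of $B$; every other $F_j$ is untouched, so only the $F_i$-summand of $\mathfrak{h}$ changes. By Proposition~\ref{Prop:small}, $B$ is either a solid torus or a small compression body $F_i[a]$ for some essential separating curve $a\subset F_i$. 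If $B$ is a solid torus, then $F_i$ is a torus and $B$ has no interior boundary, so the summand $-(2g(F_i)-1)=-1$ disappears and $\mathfrak{h}$ increases by $1$. If $B=F_i[a]$, then $a$ cuts the closed surface $F_i$ into two pieces of genera $g_1,g_2\ge 1$ with $g_1+g_2=g(F_i)$, and compressing $a$ replaces $F_i$ by two closed interior boundary components of genera $g_1$ and $g_2$; since $g_1,g_2\ge 1$ no $S^2$-components arise, so the summand $-(2(g_1+g_2)-1)$ is replaced by $-(2g_1-1)-(2g_2-1)$, again a net change of $+1$.

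Combining these, $\mathfrak{h}(C_{i+1})=\mathfrak{h}(C_i)+1$ for every $i$, and telescoping gives $\mathfrak{h}(C)=\mathfrak{h}(C_k)=\mathfrak{h}(C_0)+k=k$. In particular the length $k$ of the sequence depends only on $C$ and equals $\mathfrak{h}(C)$, which is the assertion of the proposition.

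The only step I expect to require genuine care is the separating case: one must verify that an essential separating simple closed curve on a closed genus-$g$ surface divides it into two subsurfaces of strictly positive genus whose genera sum to $g$, so that the compression produces exactly two closed interior boundary components and never a sphere — a sphere would be capped off with a $3$-ball and would disrupt the count. Everything else is routine bookkeeping of interior boundary components through the gluing, together with the classification of minimal compression bodies in Proposition~\ref{Prop:small}.
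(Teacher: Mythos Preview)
Your argument is correct. The paper itself does not supply a proof of this proposition; it is quoted directly from \cite{BV} (Prop.~2.10) and used as a black box. Your proof---showing $\mathfrak{h}(C_0)=0$ and that $\mathfrak{h}$ increases by exactly $1$ at each minimal compression, via the two cases of Proposition~\ref{Prop:small}---is the natural one and is essentially the argument given in \cite{BV}.

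One small remark on your final caveat: the worry about spheres is already handled by the classification in Proposition~\ref{Prop:small}. A minimal $F_i$-compression body that is not a solid torus is obtained by compressing a \emph{separating} curve, and such a curve on a closed orientable surface is essential precisely when neither complementary piece is a disk; hence both resulting closed components have genus at least $1$ and no $3$-ball is attached. So your bookkeeping goes through without further justification.
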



\section{Main Theorem}

We will prove Theorem 1 by constructing a family of manifolds and corresponding pseudo-Anosovs. In fact, the manifolds we construct below are compression bodies. We restate our Theorem 1 to reflect this:

\begin{named}{Theorem \ref{T:main}}
\textit{For $g=1,2,...$ there is an $S_{2g}$-compression body $C_g$ and a pseudo-Anosov $f_g:S_{2g}\to S_{2g}$ such that $f_g^g$ extends to $C_g$ and $f_g^j$ does not partially extend to $C_g$ for $j<g$.}
\end{named}

\begin{proof} Fix $2g$, the genus of the exterior boundary component, and consider the compression body $K_1:=S_{2g}[\gamma,\alpha]$ as shown in Figure \ref{Fig:cbK1}.  We will construct a pseudo-Anosov homeomorphism $f_g:S_{2g}\to S_{2g}$ such that $f^g_g$ extends to $K_1$ but $f_g^j$ does not extend to any sub-compression body of $K_1$ for $j\leq g$. 

\begin{figure}[H]
\centering
\includegraphics[height=2.4 in]{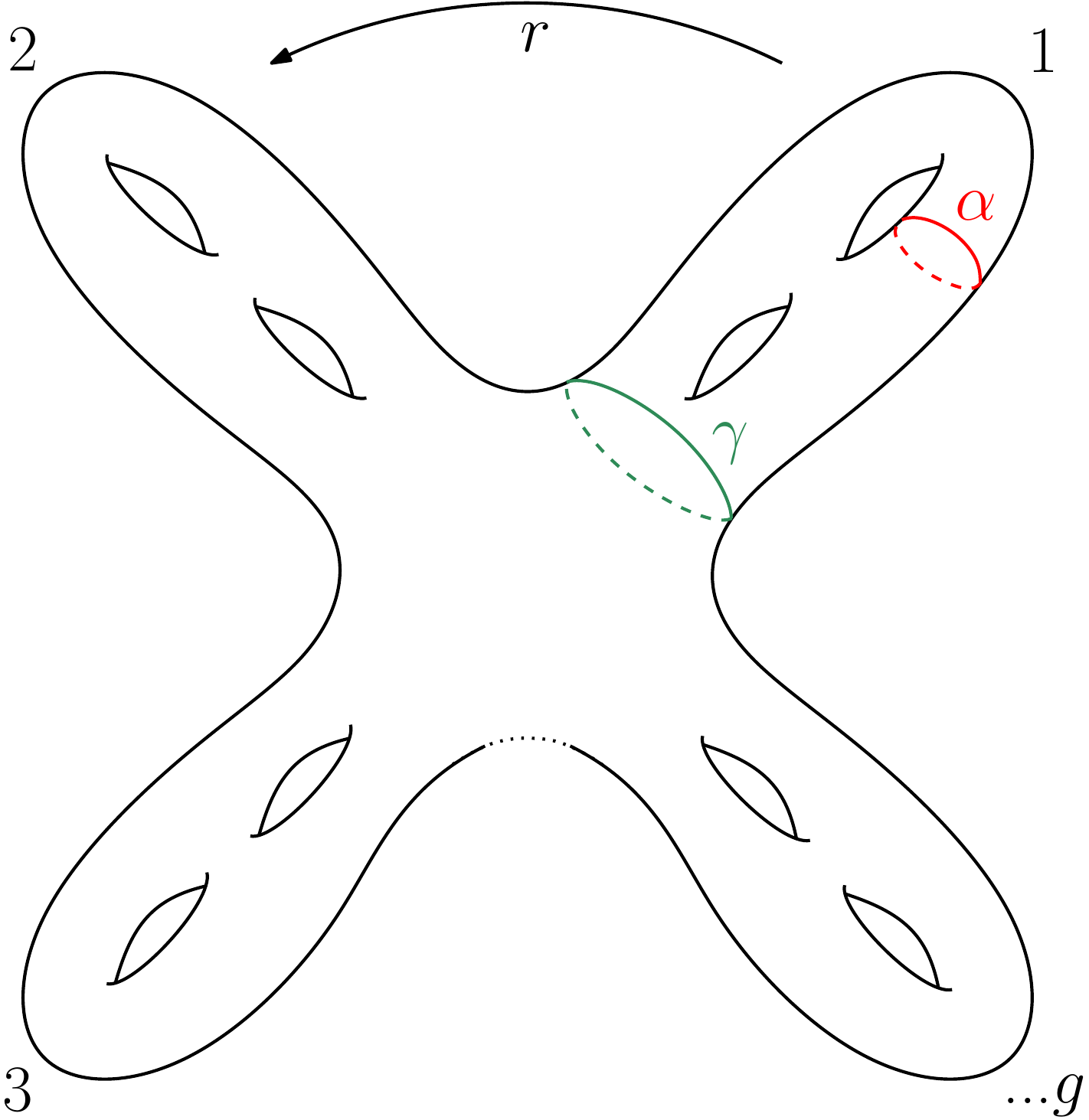}
\caption{Compression body $K_1:=S_{2g}[\gamma,\alpha]$}
\label{Fig:cbK1}
\end{figure}

Let $r$ be the rotation of $S_{2g}$ by $\frac{2\pi}{g}$. Define $K_j:=r^{j-1}(K_1)$ for $1<j\leq g$. Since $r^g$ is the identity then $r^g$ extends to $K_1$. Thus $r(K_i)=K_{(i+1)\mbox{mod}(g)}$.

\begin{lemma} Each curve in the set  $\mathcal{K}=\{\sigma,\varphi,\beta,\gamma, \alpha,b_1,b_2,b_3\}$  (see Figure \ref{Fig:setofcurves}) bounds a disk or annulus in $K_1,...,K_g$. 
\label{Lem:diskannulus}
\end{lemma}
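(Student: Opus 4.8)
The plan is to verify the claim for $K_1$ and then deduce it for the rest. Recall $K_1 = S_{2g}[\gamma,\alpha]$, so by definition $\gamma$ and $\alpha$ bound disks in $K_1$; this handles two of the eight curves immediately. For the remaining six, I would read off from Figure \ref{Fig:setofcurves} how each of $\sigma,\varphi,\beta,b_1,b_2,b_3$ sits relative to $\gamma$ and $\alpha$. The expectation, given the way such constructions are drawn, is that the curves $b_1,b_2,b_3$ are parallel into the interior boundary — i.e.\ each cobounds an embedded annulus in $K_1$ with a curve on $\partial K_1 \setminus \partial_+ K_1$ — while $\sigma,\varphi,\beta$ are each the third boundary curve of an embedded pair of pants in $S_{2g}$ whose other two boundary curves are already known to compress (either among $\{\gamma,\alpha\}$ or among curves shown to compress in an earlier step). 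Then Fact \ref{Fact:pants} finishes those cases. So the body of the argument is: (1) $\gamma,\alpha$ compress by construction; (2) exhibit the pair-of-pants relations giving that $\sigma,\varphi,\beta$ compress, applying Fact \ref{Fact:pants} (possibly iteratively); (3) exhibit the annuli showing $b_1,b_2,b_3$ bound annuli in $K_1$.

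Having established everything for $K_1$, I would pass to $K_j = r^{j-1}(K_1)$. A homeomorphism carries disks to disks and annuli to annuli, so $r^{j-1}$ takes each curve that bounds a disk (resp.\ annulus) in $K_1$ to one that bounds a disk (resp.\ annulus) in $K_j$. Thus the lemma for $K_j$ is equivalent to the statement that the set $\mathcal{K}$ is invariant (as a set of isotopy classes, up to the equivalence used throughout) under the rotation $r$ — or at least that $r^{j-1}(\mathcal{K}) \subseteq \{\text{curves bounding disks or annuli in }K_j\}$, which is all we need. Here I would point to the symmetry of Figure \ref{Fig:setofcurves}: the curves in $\mathcal{K}$ are chosen precisely so that $r$ permutes them (with the $b_i$ and the "central" curves $\sigma,\varphi,\beta,\gamma,\alpha$ each forming $r$-orbits inside $\mathcal{K}$), so that $r^{j-1}(\mathcal{K}) = \mathcal{K}$ and each element of $\mathcal{K}$ bounds a disk or annulus in every $K_j$.

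The main obstacle is step (2)–(3): correctly identifying, from the figure, the specific pairs of pants and annuli that certify the six non-obvious curves, and making sure the iterated application of Fact \ref{Fact:pants} is well-founded (i.e.\ that one never needs a curve whose compressibility has not yet been established). In particular one must check that $\sigma,\varphi,\beta$ really do lie in pairs of pants with the claimed boundary behavior, and that the $b_i$ are genuinely boundary-parallel rather than merely disjoint from the compressing disks; this is the part that depends essentially on the geometry of the construction in Figures \ref{Fig:cbK1} and \ref{Fig:setofcurves} and cannot be shortcut. Everything after that — the transport under $r$ — is formal.
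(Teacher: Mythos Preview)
Your reduction to $K_1$ via rotational symmetry is where the argument breaks. The set $\mathcal{K}$ is \emph{not} $r$-invariant: the curves in $\mathcal{K}$ are concentrated in one ``sector'' of $S_{2g}$ (roughly, the genus-$2$ piece cut off by $\gamma$, together with $\sigma$ reaching into the adjacent sector), and the whole point of the later filling argument is that $\bigcup_{i} r^i(\mathcal{K})$ is strictly larger than $\mathcal{K}$. Transporting by $r^{j-1}$ tells you only that each curve of $r^{j-1}(\mathcal{K})$ bounds a disk or annulus in $K_j$, which is not what is needed; the lemma asserts that the \emph{original} curves of $\mathcal{K}$ bound disks or annuli in every $K_j$.

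The paper's mechanism for $j\neq 1$ is different and simpler: for $j>2$ every curve of $\mathcal{K}$ is disjoint from $r^{j-1}(\gamma)\cup r^{j-1}(\alpha)$, hence lies in the product part of $K_j$ and bounds an annulus there; for $j=2$ the same holds for all of $\mathcal{K}\setminus\{\sigma\}$, while $\sigma$ intersects the $K_2$ compressions and requires a separate check (it is homotopic in $K_2$ to a curve cobounding a pair of pants with $r(\gamma)$ and $r(\alpha)$, so it compresses there). Your proposal misses this disjointness argument entirely and does not flag the exceptional behavior of $\sigma$ in $K_2$.

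Your sketch for $K_1$ also misassigns the roles: in the paper $b_1$ bounds a \emph{disk} (via a pair of pants with $\alpha$), $b_2,b_3$ bound annuli, $\varphi$ compresses not directly by Fact~\ref{Fact:pants} but as a band-sum of a curve $\varphi'$ that does, and $\sigma$ bounds an \emph{annulus} in $K_1$ (it is homotopic there to a curve on the interior boundary), not a disk. These are recoverable details, but the structural gap is the false $r$-invariance of $\mathcal{K}$.
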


\begin{figure}[h]
\centering
\includegraphics[width=4 in]{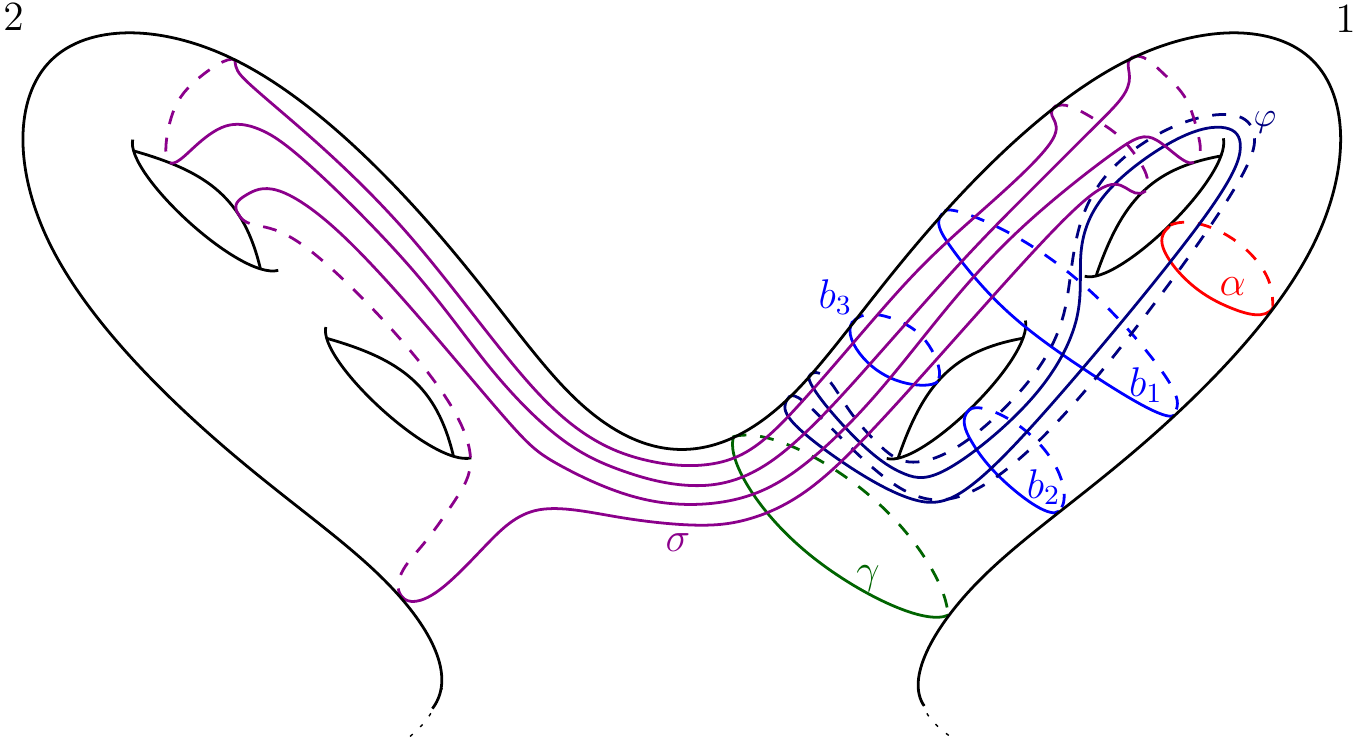}
\caption{Set $\mathcal{K}=\{\sigma,\varphi,\beta, \gamma, \alpha,b_1,b_2,b_3\}$.}
\label{Fig:setofcurves}
\end{figure}
\begin{proof}
First, note that every curve in $\mathcal{K}$ bounds an annulus in $K_i$ for $i>2$ and every curve in $\mathcal{K}\backslash \{\sigma\}$ bounds an annulus in $K_2$. 

Clearly $\alpha$ and $\gamma$ bound disks in $K_1$. Also $b_2$ and $b_3$ bound annuli. Since $\alpha$ and $b_1$ co-bound a pair of pants, by Fact \ref{Fact:pants}, $b_1$ also bounds a disk in $K_1$. Also by Fact \ref{Fact:pants}, curve $\varphi'$ bounds a disk in $K_1$ as seen in Figure \ref{Fig:phi}. Notice that $\varphi$ is a band-sum of this disk. Thus $\varphi$ also bounds a disk in $K_1$.

\begin{figure}[h]
\centering
\includegraphics[height=2 in]{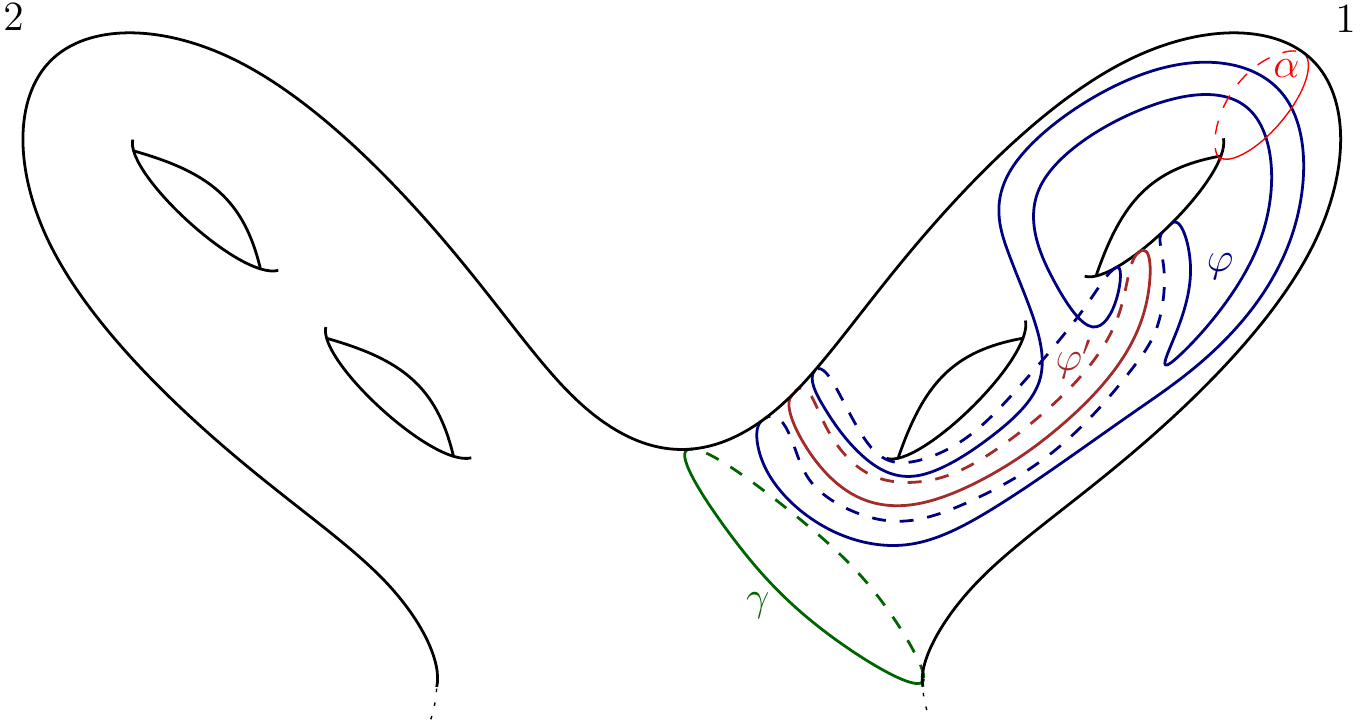}
\caption{Curve $\varphi$, a bandsum of $\varphi'$}
\label{Fig:phi}
\end{figure}

In Figure \ref{Fig:sigmaK1} in steps (a) through (d) we see that $\sigma$ homotopes to $\tilde{\sigma_1}$ in $K_1$ which bounds an annulus. Hence $\sigma$ bounds an annulus in $K_1$. In Figure \ref{Fig:sigmaK2} we see that in $K_2$, $\sigma$ is homotopic to $\tilde{\sigma_2}$ which co-bounds a pair of pants with  $r(\gamma)$ and $r(\alpha)$ which both compress in $K_2$. Thus $\sigma$ bounds a disk in $K_2$.  

\begin{figure}[h]
\centering
\includegraphics[width=4.5 in]{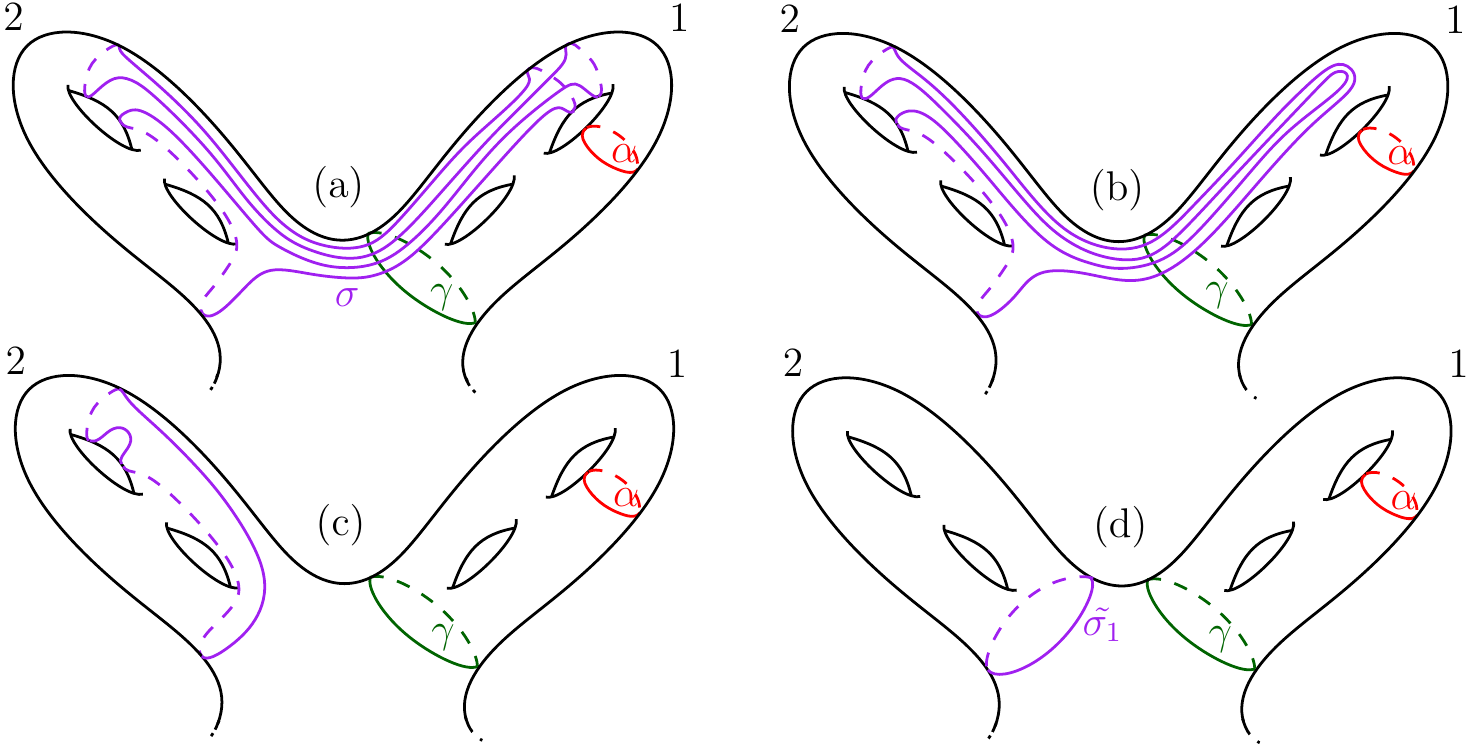}
\caption{Curve $\sigma$ bounds an annulus in $K_1$}
\label{Fig:sigmaK1}
\end{figure}

\begin{figure}[h]
\centering
\includegraphics[width=6 in]{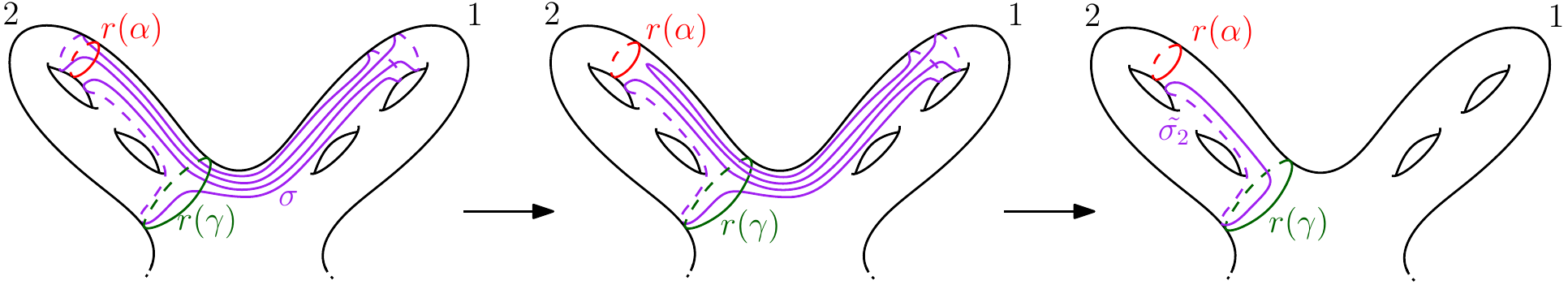}
\caption{Curve $\sigma$ bounds a disk in $K_2$}
\label{Fig:sigmaK2}
\end{figure}
\end{proof}

We will apply the following theorem of Fathi: 
\begin{theorem*}[\cite{F}, Theorem 0.2] Let $h$ be a mapping class of surface a $S$ and $\gamma_1,...,\gamma_k$ be simple closed curves on $S$. Suppose that the orbits under $h$ of the $\gamma_i$ are distinct and fill $S$. Then there exists an $n\in \mathbb{N}$ such that for every $(n_1,...,n_k)\in \mathbb{Z}^k$ with $|n_i|\geq n$, the class $T^{n_k}_{\gamma_k}...T^{n_1}_{\gamma_1}h$ is pseudo-Anosov.
\end{theorem*}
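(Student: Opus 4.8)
The plan is to prove $g := T^{n_k}_{\gamma_k}\cdots T^{n_1}_{\gamma_1} h$ is pseudo-Anosov by appealing to the Nielsen--Thurston classification: a mapping class is pseudo-Anosov precisely when it is neither periodic nor reducible. Both failures are witnessed by an essential multicurve with finite orbit---if $g$ is periodic then $g^p(C)=C$ for its period $p$ and any $C$, while if $g$ is reducible its canonical reduction system is $g$-invariant---so it suffices to show that for $n$ large, no essential multicurve $C$ satisfies $g^p(C)=C$ for any $p\ge 1$. Write $\phi := T^{n_k}_{\gamma_k}\cdots T^{n_1}_{\gamma_1}$, so that $g=\phi h$.

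First I would record the algebraic identity, using $hT_\gamma h^{-1}=T_{h(\gamma)}$,
\[
 g^m = \phi\,(h\phi h^{-1})\,(h^2\phi h^{-2})\cdots (h^{m-1}\phi h^{-(m-1)})\,h^m =: \Phi_m\, h^m,
\]
where $\Phi_m$ is a product of the $T^{\pm n_i}$-twists along the curves $h^\ell(\gamma_i)$ for $0\le \ell< m$ and $1\le i\le k$. By hypothesis the $h$-orbits of the $\gamma_i$ fill $S$, so a finite sub-family of the orbits already fills; translating by a power of $h$ we may take it of the form $\mathcal F_m := \{\,h^\ell(\gamma_i): 0\le \ell<m,\ 1\le i\le k\,\}$, and it fills $S$ for all $m\ge m_0$. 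The distinctness of the orbits guarantees these curves are genuinely distinct, so the filling does not degenerate as $m$ grows. Thus $g^m$ is, up to the finite-order-like factor $h^m$, a product of high powers of Dehn twists along a filling family.

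The engine of the argument is the standard twist estimate: for curves $\gamma,\delta$ and a measured lamination $\mu$,
\[
 \bigl|\, i(T^n_\gamma(\mu),\delta)-|n|\,i(\mu,\gamma)\,i(\gamma,\delta)\,\bigr|\le i(\mu,\delta).
\]
Because $\mathcal F_m$ fills, every essential $C$ has $i(C,\delta)>0$ for some $\delta\in\mathcal F_m$; feeding $C$ through $\Phi_m$ and iterating this inequality, the $|n_i|\ge n$ factors force the geometric intersection number $i(g^m(C),C)$ to grow without bound in $n$. Taking $m$ to be a large multiple of $p$, the relation $g^p(C)=C$ gives $g^m(C)=C$ and hence $i(g^m(C),C)=i(C,C)=0$, contradicting this growth. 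Hence for $n$ large no essential multicurve has finite orbit and $g$ is pseudo-Anosov.

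I expect the main obstacle to be twofold: making the estimate of the previous paragraph genuinely grow for a long product of \emph{non-commuting, mixed-sign} twists (where naive application risks cancellation rather than growth), and obtaining a \emph{single} threshold $n$ valid for all of the infinitely many candidate multicurves $C$. Both are resolved by recasting the inequality as dynamics on the compact space $\mathcal{PML}(S)$: the contraction $T^n_\gamma(\mu)\to[\gamma]$ as $|n|\to\infty$ shows that large twisting collapses a large region of $\mathcal{PML}(S)$ uniformly toward the subset spanned by $\mathcal F_m$, and compactness upgrades this to a uniform threshold; the filling hypothesis then forces the resulting attracting fixed point to be a \emph{filling} lamination, and the symmetric statement for $g^{-1}$ yields source--sink dynamics, which is equivalent to $g$ being pseudo-Anosov. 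As a modern cross-check one can instead verify that $g$ acts loxodromically on the curve complex, showing that the large twists produce uniformly large subsurface projections along $\mathcal F_m$ and invoking the Masur--Minsky criterion.
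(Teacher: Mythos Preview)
The paper does not prove this statement at all: it is Fathi's theorem, quoted verbatim from \cite{F} and used as a black box to produce the pseudo-Anosov $f_g$. There is therefore no ``paper's own proof'' to compare against, and you were not expected to supply one.

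That said, your sketch is broadly aimed in the right direction---Fathi's original argument does proceed via the action on $\mathcal{PML}(S)$ and the intersection-number inequality you quote, establishing source--sink dynamics for $g$ once the $|n_i|$ are large. A few remarks on the sketch itself. First, the hypothesis that the $h$-orbits of the $\gamma_i$ are \emph{distinct} means the orbits are distinct as sets; it does not force the curves $h^\ell(\gamma_i)$ in your $\mathcal F_m$ to be pairwise distinct (e.g.\ if $h$ has finite order), so your parenthetical ``the filling does not degenerate'' needs a slightly different justification. Second, the core difficulty you correctly flag---getting genuine growth through a long product of non-commuting, mixed-sign twists, uniformly over all multicurves---is exactly the content of Fathi's paper, and your final paragraph essentially defers to that; as written, the middle paragraph's ``iterating this inequality'' is not yet an argument, since naive iteration can lose a factor at each step. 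Your $\mathcal{PML}$ compactness remark is the honest fix, but carrying it out is the whole proof, not a cross-check.
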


The curves $\bigcup_{i=1}^{g} r^i(\mathcal{K})$ fill $S_{2g}$ as follows. Looking at Figure 2, we see that $\alpha, \varphi,\sigma, b_1,b_2,$ and $b_3 $ fill the genus 2 component of $S_{2g}\backslash \gamma$. Also, note that the genus 0 component of $S_{2g}\backslash \bigcup_{i=0}^{g-1} r^i(\gamma)$ is filled by $\bigcup_{i=0}^{g-1} r^i(\sigma)$. Hence the orbit of $\mathcal{K}$ under $r$ fills $S_{2g}$. 

By Fathi's theorem there exists $n_{g_1},\cdots, n_{g_7}\in \mathbb{Z}$ such that $$T_{\gamma}^{n_{g_1}}\circ T_{\alpha}^{n_{g_2}}\circ T_{\sigma}^{n_{g_3}}\circ T_{\varphi}^{n_{g_4}}\circ T_{b_1}^{n_{g_5}}\circ T_{b_2}^{n_{g_6}}\circ T_{b_3}^{n_{g_7}}\circ r:=f_g$$ is pseudo-Anosov. 

\begin{lemma} The homeomorphism $f_g^g:S_{2g}\to S_{2g}$ extends to $K_1$ but $f_g^j$ does not extend to $K_1$ for $j<g$. 
\label{Lem:extend}
\end{lemma}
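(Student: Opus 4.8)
The plan is to prove the two halves separately. For the positive direction, I would show that $f_g^g$ extends to $K_1$ by exhibiting each of the generating pieces of $f_g^g$ as a homeomorphism that extends. Write $f_g = T \circ r$ where $T = T_{\gamma}^{n_{g_1}}\circ T_{\alpha}^{n_{g_2}}\circ T_{\sigma}^{n_{g_3}}\circ T_{\varphi}^{n_{g_4}}\circ T_{b_1}^{n_{g_5}}\circ T_{b_2}^{n_{g_6}}\circ T_{b_3}^{n_{g_7}}$ is a product of Dehn twists in the curves of $\mathcal{K}$. By Lemma \ref{Lem:diskannulus}, every curve in $\mathcal{K}$ bounds a disk or annulus in $K_1$, so by Fact \ref{Fact:dehntwist} each twist $T_c$ with $c \in \mathcal{K}$ extends to $K_1$; hence $T$ extends to $K_1$. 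The rotation $r$ does not extend to $K_1$ (it sends $K_1$ to $K_2$), but $r^g$ is the identity, so $r^g$ extends trivially. The subtlety is that $f_g^g$ is not simply $T^g \circ r^g$ because $T$ and $r$ do not commute. The clean way around this: expand $f_g^g = (Tr)^g = T \cdot (rTr^{-1}) \cdot (r^2 T r^{-2}) \cdots (r^{g-1} T r^{-(g-1)}) \cdot r^g$. Each conjugate $r^k T r^{-k}$ is a product of Dehn twists in the curves $r^k(\mathcal{K})$, and these curves bound disks or annuli in $r^k(K_1) = K_{k+1}$. But we need them to extend to $K_1$, not to $K_{k+1}$. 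So instead I would argue directly: $r^k T r^{-k}$ is conjugate by $r^k$ to a map extending over $K_1$, hence it extends over $r^k(K_1) = K_{k+1}$; this shows $f_g^g$ extends over a \emph{sub-compression body common to all the $K_j$}, or — better — I would note that it suffices to produce \emph{some} homeomorphism $\phi$ of $K_1$ restricting to $f_g^g$. Realize $r$ as an actual rotation of the solid-handle picture permuting the handles cyclically: then $r$ is induced by an ambient homeomorphism of the family $\bigcup K_j$, and $r^g$ genuinely extends to $K_1$; each twist $T_c$, $c\in r^k(\mathcal{K})$, extends to $K_{k+1}$, and composing with the rotation identification gives that the whole word $(Tr)^g$, read as a homeomorphism $K_1 \to r^g(K_1)=K_1$, is a valid self-homeomorphism. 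This is the step I expect to require the most care, since it is really a statement about how the twists interact with the rotation symmetry of the construction.

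For the negative direction — that $f_g^j$ does not extend to $K_1$ for $j<g$ — I would use the action of mapping classes on (equivalence classes of) $S_{2g}$-compression bodies together with the height/disk-set invariants from Section 2. The key point is that $r$ genuinely moves $K_1$: the $r$-orbit of $K_1$ consists of the $g$ distinct compression bodies $K_1, \dots, K_g$ (distinct because, say, their meridian disk sets in $S_{2g}$ are distinct subsets of the curve complex, being genuine rotations of one another about distinct handles). Since the Dehn twists appearing in $T$ are all in curves that \emph{compress or bound annuli in $K_1$}, the map $T$ fixes the equivalence class $K_1$: $T(K_1) = K_1$. Therefore $f_g(K_1) = T(r(K_1)) = T(K_2) = K_2$ — wait, I need $T$ to fix $K_2$ as well, which follows from Lemma \ref{Lem:diskannulus} since every curve in $\mathcal{K}\setminus\{\sigma\}$ bounds an annulus in $K_2$ and $\sigma$ bounds a disk in $K_2$, so indeed $T(K_2)=K_2$, and more generally $T(K_j)=K_j$ for all $j$ by the same lemma. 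Hence $f_g(K_j) = T(K_{j+1}) = K_{j+1 \bmod g}$, so $f_g$ acts on $\{K_1,\dots,K_g\}$ as a $g$-cycle, and $f_g^j(K_1) = K_{j+1 \bmod g} \neq K_1$ for $0 < j < g$. If $f_g^j$ extended to $K_1$ as a homeomorphism $\phi$ of $K_1$, then by definition $f_g^j(K_1) = K_1$, a contradiction. This handles non-extension to $K_1$ itself.

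Strictly, the lemma as stated only claims $f_g^j$ does not extend to $K_1$, so the orbit argument above suffices; the stronger statement about sub-compression bodies (non-partial-extension) is deferred to a later lemma. I would therefore keep this proof focused: establish $T(K_j) = K_j$ for every $j$ from Lemma \ref{Lem:diskannulus}, deduce that $f_g$ permutes $\{K_1,\dots,K_g\}$ cyclically, conclude $f_g^j(K_1)\neq K_1$ for $0<j<g$ and $f_g^g(K_1) = K_1$, and then upgrade the latter equation to an actual extending homeomorphism using the symmetry realization of $r$ together with Fact \ref{Fact:dehntwist} applied in each $K_{k+1}$. The main obstacle, as noted, is the bookkeeping in the positive direction: turning the combinatorial identity $(Tr)^g(K_1) = K_1$ into a genuine homeomorphism of $K_1$ restricting to $f_g^g$ on the boundary, which requires choosing the extensions of the individual twists compatibly with the cyclic rotation of the handles.
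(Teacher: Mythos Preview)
Your approach is essentially the paper's: show $T(K_j)=K_j$ for every $j$ via Lemma~\ref{Lem:diskannulus} and Fact~\ref{Fact:dehntwist}, combine with $r(K_j)=K_{j+1\bmod g}$ to get that $f_g$ cyclically permutes $K_1,\dots,K_g$, and read off both conclusions. The negative direction is exactly right.

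Your only misstep is in the positive direction, where you worry about ``bookkeeping'' to upgrade the equation $f_g^g(K_1)=K_1$ to an actual extending homeomorphism. This concern is unnecessary: by the definition of the action and of extension in Section~2, the statement $f(C)=C$ (as equivalence classes of $S$-compression bodies) \emph{is} the statement that $f$ extends to $C$. Concretely, $f(C)=C$ means $(C,m)$ and $(C,mf^{-1})$ are equivalent, i.e.\ there is a homeomorphism $h:C\to C$ with $h|_{\partial_+C}\circ m = mf^{-1}$; then $h^{-1}$ restricts to $f$ on $\partial_+C$ under the marking. So once you have $f_g^g(K_1)=K_1$ from the orbit computation, the lemma is proved---no further compatibility argument is needed. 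The paper's proof is accordingly one short paragraph.
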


\begin{proof} Recall that $r(K_i)=K_{(i+1)\mbox{mod}(g)}$. By Lemma \ref{Lem:diskannulus}, every curve in $\mathcal{K}$ bounds a disk or annulus so applying Proposition \ref{Fact:dehntwist} implies that $T_c$ extends to $K_i$  for all $c\in \mathcal{K}$ and $1\leq i\leq g$. Therefore, $f_g(K_i)=K_{(i+1)\mbox{mod}(g)}$. Hence, $f_g^g(K_1)=K_1$ and so $f_g^g$ extends to $K_1$ but $f_g^j$ does not for $j<g$. 
\end{proof}

We must now show that $f^j$ for $j<g$ does not \textit{partially} extend to $K_1$. First, we prove two lemmas.

\begin{lemma} Every common meridian of $K_i$ and $K_j$ for $i\neq j$ is separating.
\label{Lem:separating}
\end{lemma}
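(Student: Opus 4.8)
The plan is to analyze common meridians of $K_i$ and $K_j$ by exploiting the fact that $K_j = r^{j-1}(K_1)$, so the meridian sets are related by the rotation $r$. Fix a curve $c$ that is a meridian of both $K_i$ and $K_j$ with $i\neq j$, and suppose for contradiction that $c$ is non-separating in $S_{2g}$. First I would recall the structure of $K_1 = S_{2g}[\gamma,\alpha]$: its interior boundary consists of a genus-$0$ piece (with $g$ boundary circles, coming from the separating curve $\gamma$ and the rotation) together with a closed genus-$(2g-2)$ surface — more precisely, I should read off from Figure~\ref{Fig:cbK1} exactly which curves become the interior boundary, but the key qualitative point is that the meridian disk system of $K_1$ is generated (under the disk-swapping/band-sum moves of Facts~\ref{Fact:pants}) by $\gamma$ and $\alpha$, and $\gamma$ is separating while $\alpha$ cuts off the genus-$2$ handlebody-like region visible in Figure~\ref{Fig:setofcurves}.

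The core step: a curve $c$ is a meridian of $K_1$ exactly when it lies in the "disk set" $\mathcal{D}(K_1)$, and I want to show every \emph{non-separating} element of $\mathcal{D}(K_1)$, when one also requires it to lie in $\mathcal{D}(K_j) = r^{j-1}\mathcal{D}(K_1)$, forces a contradiction. The mechanism I expect to use is homological: passing to $H_1(S_{2g};\mathbb{Z})$, the meridians of $K_1$ span a Lagrangian-type subspace $L_1$ (the kernel of $H_1(S_{2g})\to H_1(K_1)$), and since $\gamma$ is separating, $[\gamma]=0$, so $L_1$ is actually the span of $[\alpha]$ together with the classes killed by capping the genus-$2$ region; in particular $L_1$ is a \emph{proper isotropic} subspace of rank $g$ (half the genus). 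Then $L_j = r^{j-1}_* L_1$. The point is that the rotation $r_*$ acts on $H_1(S_{2g})$ with controlled eigenvalues ($g$-th roots of unity, via the cyclic symmetry), and a non-separating class in $L_1\cap L_j$ would have to be a non-zero $r_*$-near-invariant vector lying in two "rotated" copies of $L_1$; computing $L_1\cap r_*^{j-1}L_1$ for $0<j<g$ and checking it contains only classes of separating curves (e.g. it lies in the span of the $[\beta_i]$ that map to the central genus-$0$ piece, all of which are separating) gives the claim. Equivalently and more geometrically: any non-separating meridian $c$ of $K_1$ must intersect the genus-$2$ region essentially (since $\gamma$-and-below is where all the non-separating homology of $\mathcal D(K_1)$ lives), but $c$ being a meridian of $K_j=r^{j-1}(K_1)$ puts $c$ into the genus-$2$ region rotated by $r^{j-1}$, and for $0<j-1<g$ these two genus-$2$ regions overlap only in the central genus-$0$ collar where every simple closed curve is separating — contradiction.

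The main obstacle I anticipate is making precise the claim that "the only non-separating meridians of $K_1$ are the ones supported in the genus-$2$ handlebody region together with $\alpha$" — i.e., fully describing $\mathcal{D}(K_1)$ up to the moves available. I would handle this via the standard fact that the disk set of a compression body is generated from a defining set of meridians ($\{\gamma,\alpha\}$ here) by isotopy, pants moves (Fact~\ref{Fact:pants}), and band sums, all of which preserve the homology class mod the span of the generators; combined with the observation that $[\gamma] = 0$ and that $\alpha$ lies entirely inside the genus-$2$ region, every meridian of $K_1$ is homologous to a curve supported there, and a separating curve in $S_{2g}$ is exactly one whose class is zero — so I really only need the homological version above, and the geometric picture is just a sanity check. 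A secondary subtlety is the case $i$ or $j$ equal to $1$ versus general $i,j$: but since $K_i, K_j$ are both rotates of $K_1$, by applying $r^{-(i-1)}$ (a homeomorphism of $S_{2g}$ preserving the separating/non-separating dichotomy) we reduce to the case $i=1$, $j'=j-i+1 \pmod g$ with $0 < j' < g$, which is exactly the case treated above.
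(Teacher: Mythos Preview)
Your homological approach is exactly the one the paper uses: a common meridian $\omega$ of $K_i$ and $K_j$ has class lying in $L_i\cap L_j$, where $L_i=\ker\big(H_1(S_{2g})\to H_1(K_i)\big)$, and one shows this intersection is zero so that $\omega$ is null-homologous and hence separating. So the strategy is right.

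The execution, however, contains a miscalculation that also reflects a misunderstanding of $K_1$. You describe the interior boundary of $K_1$ as ``a genus-$0$ piece with $g$ boundary circles together with a closed genus-$(2g-2)$ surface'' and then assert that $L_1$ has rank $g$. Neither is correct. The compression body $K_1=S_{2g}[\gamma,\alpha]$ is obtained by attaching exactly \emph{two} $2$-handles, along $\gamma$ (separating) and $\alpha$ (non-separating), and capping spheres; its interior boundary is $S_1\sqcup S_{2g-2}$. There is no ``capping of the genus-$2$ region'' beyond the single compression along $\alpha$. Consequently (by Mayer--Vietoris, or just by observing that only $[\gamma]$ and $[\alpha]$ are killed and $[\gamma]=0$), the kernel $L_1$ is the rank-\emph{one} subgroup $\langle[\alpha]\rangle$, not a rank-$g$ isotropic. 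The paper's proof is then immediate: $L_i=\langle[\alpha_i]\rangle$ with $\alpha_i=r^{i-1}(\alpha)$, the classes $[\alpha_1],\dots,[\alpha_g]$ are part of a basis of $H_1(S_{2g})$, so $L_i\cap L_j=0$ for $i\neq j$.

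Because of the rank error, the detours you propose---computing eigenvalues of $r_*$, or arguing geometrically that the two rotated genus-$2$ regions ``overlap only in the central collar''---are both unnecessary and, as stated, not complete arguments (a meridian of $K_1$ can be a complicated band-sum such as $\varphi$ that is not supported in any single region, so the geometric localization claim would itself need the homological computation to justify). Once you correct $L_1$ to rank one, all of that scaffolding falls away and you recover exactly the paper's two-line proof.
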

\begin{proof} Let's first consider the homology of $K_i$. Set $\alpha_i:=r^{i-1}(\alpha)$ and $\gamma_i:=r^{i-1}(\gamma)$. Hence both $\alpha_i$ and $\gamma_i$ compress in $K_i$. Let $D_{\alpha_i}$ and $D_{\gamma_i}$ denote the disks in $K_i$ bounded by $\alpha_i$ and $\gamma_i$ respectively. Consider the following portion of the the Mayer-Vietoris long exact sequence: 
$$H_1(S_{2g}\cap (D_{\alpha_i}\sqcup D_{\gamma_i}))\xrightarrow{\Phi_i}H_1(S_{2g})\oplus H_1(D_{\alpha_i}\sqcup D_{\gamma_i})\xrightarrow{\Psi_i}H_1(K_i).$$

Note that $H_1(D_{\alpha_i}\sqcup D_\gamma)=0$ giving $H_1(S_{2g})\xrightarrow{\Psi_i}H_1(K_i)$. Also, $S_{2g}\cap (D_{\alpha_i}\sqcup D_{\gamma_i})=\{\alpha_i,\gamma_i\}$. By exactness of the Mayor-Vietoris sequence, $ \mbox{Im}(\Phi_i)=\Phi_i(\{\alpha_i,\gamma_i\})=\ker(\Psi_i)$. Moreover, $\gamma_i$ is separating and hence $\Phi_i([\gamma_i])=0$ in $H_1(S_{2g})$. Therefore, $\ker(\Psi_i)$ is generated by $\alpha_i$. 

Note that the $[\alpha_i]$ for $i\in \{1,2,...,g\}$ form a subset of a basis for $H_1(S_{2g})$. If $\omega$ is a simple closed curve in $S_{2g}$ and a meridian of both $K_i$ and $K_j$ then $[\omega]\in \ker(\Psi_i)\cap\ker(\Psi_j)\subset H_1(S_{2g})$. So $[\omega]\in \langle[\alpha_i]\rangle\cap \langle[\alpha_j]\rangle$ implying $[\omega]=0$ in $H_1(S_{2g})$. Therefore, $\omega$ is a separating curve of $S_{2g}$. 
\end{proof}

\begin{lemma} Every $S$-compression body $D\subset K_i\cap K_j$ is small.
\label{Lem:small}
\end{lemma}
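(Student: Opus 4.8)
The plan is to bound the height of any $S$-compression body $D \subset K_i \cap K_j$ (meaning $D$ embeds in $K_i$ compatibly with markings, and likewise in $K_j$) and conclude it must be small. Recall from Proposition \ref{Prop:height} that the height of a compression body is governed by its interior boundary, and from Proposition \ref{Prop:small} that a minimal compression body is either a solid torus or a small one obtained by compressing a separating curve. Since $D \subset K_i$, every meridian of $D$ is a meridian of $K_i$; similarly every meridian of $D$ is a meridian of $K_j$. Hence every meridian of $D$ is a \emph{common} meridian of $K_i$ and $K_j$, so by Lemma \ref{Lem:separating} every meridian of $D$ is separating in $S_{2g}$.

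First I would show $D$ has height exactly $1$, i.e.\ $D$ is obtained from $S\times[0,1]$ by a single minimal compression. Suppose for contradiction $\mathfrak{h}(D) \geq 2$. Take a sequence of minimal compressions $S\times[0,1] = D_0 \subset D_1 \subset \cdots \subset D_k = D$ with $k \geq 2$. The first step $D_1$ is a minimal $S$-compression body, so by Proposition \ref{Prop:small} it is $S[a_1]$ for a separating curve $a_1$ (it cannot be a solid torus since $g(S_{2g}) = 2g > 1$, and in any case a solid-torus minimal compression of $S\times[0,1]$ would compress a non-separating curve, contradicting that all meridians of $D$ are separating). Cutting $S_{2g}$ along $a_1$ yields two pieces of genus $g_1$ and $2g - g_1$ with $0 < g_1 < 2g$; the interior boundary of $D_1$ is $F \sqcup F'$ with $g(F) = g_1$, $g(F') = 2g - g_1$. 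The subsequent compressions are performed on these interior components. The key point is that a meridian $c$ of $D$ appearing at a later stage, when pushed to $\partial_+ D = S_{2g}$, bounds a disk in $D$ that is disjoint from (a copy of) $a_1$, so $c$ and $a_1$ have representatives that are disjoint; and $c$ is non-peripheral in the genus-$g_1$ or genus-$(2g-g_1)$ piece, so $c$ is non-separating in $S_{2g}$ unless it is parallel to $a_1$ or itself separating within that subsurface. The point I need to nail down is that a second minimal compression necessarily introduces a meridian that is non-separating in $S_{2g}$ — using that the two interior pieces of $D_1$ each have genus at least $1$ but strictly less than $2g$, so any essential separating curve in one of those pieces, viewed in $S_{2g}$, together with $a_1$ realizes a compression; or more directly, in the genus-$g_1$ piece (with $g_1 \geq 1$) a minimal compression is either a solid torus (giving a non-separating meridian in $S_{2g}$, contradiction) or compresses a curve separating that piece, and one checks this produces a curve that is non-separating in the whole of $S_{2g}$. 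This contradicts Lemma \ref{Lem:separating}, so $\mathfrak{h}(D) \leq 1$.

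If $\mathfrak{h}(D) = 0$ then $D = S\times[0,1]$ is trivial and vacuously ``small'' is not literally the claim — but then $D$ contains no nontrivial sub-compression body and we should interpret the statement as: any \emph{nontrivial} such $D$ is small, so assume $\mathfrak{h}(D) = 1$. Then $D = D_1$ is itself minimal, hence by Proposition \ref{Prop:small} it is a solid torus or $S[a]$ with $a$ separating; the solid-torus case is excluded as above since its meridian would be non-separating in $S_{2g}$. Therefore $D = S_{2g}[a]$ for a single (separating) curve $a$, which is precisely the definition of small. The main obstacle I anticipate is the disjointness/position argument in the height-$\geq 2$ case: making rigorous that the later minimal compressions can be taken to have meridians disjoint from $a_1$ and hence supported in one of the two lower-genus pieces, and then verifying that \emph{any} minimal compression of such a piece yields a curve that is non-separating in $S_{2g}$ (this is where one uses $1 \leq g_1 < 2g$, so neither piece is a torus-with-the-compression-already-done nor the whole surface). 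I would handle this by the standard innermost-disk / disk-exchange argument to arrange disjoint meridian disks, then a direct homological computation of $[c] \in H_1(S_{2g})$ for a meridian $c$ of the second compression, analogous to the Mayer–Vietoris computation in the proof of Lemma \ref{Lem:separating}.
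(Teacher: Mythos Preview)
Your proposal has a genuine gap. The crucial claim --- that a second minimal compression of $D$ must introduce a meridian that is non-separating in $S_{2g}$ --- is false. If $a_1$ is separating and cuts $S_{2g}$ into subsurfaces $A$ (of genus $g_1\geq 2$) and $A'$, and $c\subset A$ is a curve that separates the closed surface $F=A\cup(\text{disk})$, then $c$ also separates $A$ (capping a single boundary component with a disk does not change whether the complement of $c$ is connected), and hence $c$ separates $S_{2g}$. Concretely, for $g\geq 2$ let $a_1$ cut off a genus-$2$ subsurface and let $a_2$ be a separating curve inside that subsurface; then $S_{2g}[a_1,a_2]$ has height $2$ and every one of its meridians is null-homologous, hence separating. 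So Lemma~\ref{Lem:separating} alone cannot force $\mathfrak{h}(D)\leq 1$, and the homological check you sketch at the end would simply reconfirm that the new meridian is separating rather than produce a contradiction.

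The paper supplies the ingredient you never invoke: it computes $\mathfrak{h}(K_i)=3$ from the interior boundary $S_{2g-2}\sqcup S_1$, which (together with $K_i\neq K_j$) gives $\mathfrak{h}(D)\leq 2$. The case $\mathfrak{h}(D)=2$ is then excluded not by a separating/non-separating dichotomy but by analysing the interior boundary of $D$. Two separating compressions yield three closed interior components $F_1,F_2,F_3$ with $\sum g(F_\ell)=2g$ and each $g(F_\ell)\geq 1$; since $D\subset K_i$ and $K_i$ has an interior component of genus $2g-2$, some $F_\ell$ must have genus at least $2g-2$, forcing the genera to be $(2g-2,1,1)$. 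The single remaining minimal compression from $D$ up to $K_i$ must then occur on one of the genus-$1$ components (to preserve a genus-$(2g-2)$ interior component), so $K_i$ inherits the genus-$(2g-2)$ interior component of $D$; the same reasoning applied to $K_j$ forces $K_i$ and $K_j$ to share that interior boundary component, contradicting their construction. You should replace your height-$\geq 2$ paragraph with this argument.
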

\begin{proof} For all $i$, the interior boundary components of $K_i$ are homeomorphic to $S_1\sqcup S_{2g-2}$. Applying Proposition \ref{Prop:height}, for all $i$, the height of $K_i$ is 

\begin{eqnarray*}
\mathfrak{h}(K_i)&=&(2(2g)-1)-[2(1)-1]-[2(2g-2)-1]\\
&=&4g-1-1-(4g-5)\\
&=&3.
\end{eqnarray*}

Let $D$ be an $S$-compression body with $D\subset K_i\cap K_j$. Then $\mathfrak{h}(D)<3$. 

If $\mathfrak{h}(D)=1$ then $S_{2g}\times [0,1]=C_0\subsetneq C_1=D$ is a sequence of minimal compressions for $D$. By Proposition \ref{Prop:small}, since $D$ is not a solid torus, $D$ is small.

If  $\mathfrak{h}(D)=2$ then $S_{2g}\times [0,1]=C_0\subsetneq C_1\subsetneq C_2=D$ is a sequence of minimal compressions for $D$. By Lemma \ref{Lem:separating}, both compressions are along separating curves. Therefore, the interior boundary components of $D$ are $F_1\sqcup F_2\sqcup F_3$ where $g(F_1)+g(F_2)+g(F_3)=2g$ and $g(F_\ell)>0$ for $\ell\in\{1,2,3\}$. Recall that the interior boundary components of $K_i$ are $S_{2g-2}\sqcup S_1$. Then $D\subset K_i$ implies that $g(F_\ell)\geq 2g-2$ for some $\ell\in\{1,2,3\}$. Without loss of generality, say $g(F_1)\geq 2g-2$. This forces $g(F_1)=2g-2$, $g(F_2)=1$ and $g(F_3)=1$. 

There is some sequence of minimal compressions with $S_{2g}\times[0,1]=C_0\subsetneq C_1\subsetneq C_2=D\subsetneq C_3=K_i$. Thus we must compress a curve in $F_2$ or $F_3$ to obtain $K_i$ (to preserve the genus $2g-2$ interior boundary component). Likewise, we must compress a curve in $F_2$ or $F_3$ to obtain $K_j$. Therefore, $K_i$ and $K_j$ share $F_1$, the same genus $2g-2$ interior boundary component. But this is false by construction of $K_i$ and $K_j$. Therefore, $\mathfrak{h}(D)\neq 2$. 

Thus, $D$ is a small compression body.
\end{proof}

To prove Theorem \ref{T:main} recall we must show that for all $j<g$, $f_g^j$ does not \textit{partially} extend to $K_1$. Assume for sake of contradiction that there is some $S_{2g}$-compression body $K'\subsetneq K_1$ with $\ell<g$ such that $f_g^\ell$ extends to $K'$. Then $f_g^{\ell} (K')=K'$. Since $f_g^\ell(K_1)=K_{\ell+1}$, this implies that $K'\subset K_{\ell+1}$. Thus $K'\subset K_1\cap K_{\ell+1}$. By Lemma \ref{Lem:small}, $K'$ must be a small $S_{2g}$-compression body and by Lemma \ref{Lem:separating}, $K'$ has exactly one meridian, $a$, a separating curve. Then $f_g^\ell$ must map the disk bounded by $a$ to itself. This implies that $f_g^\ell(a)=a$ which contradicts the fact that $f_g^\ell$ is pseudo-Anosov.

Therefore, $f^g$ extends to $K_1$ and no lesser power partially extends to $K_1$. As the genus to goes to infinity, the power required for $f$ to extend to the corresponding compression body $K_1$ also goes to infinity. 
\end{proof}

\end{document}